\documentclass[A4paper, 12pt]{article}

\usepackage{hyperref}
\usepackage[utf8]{inputenc}
\usepackage[OT2,T1]{fontenc}
\usepackage[french, english]{babel}
\usepackage[osf]{mathpazo}
\usepackage{amsmath}
\usepackage{graphicx}

\usepackage{color}
\definecolor{FOB}{rgb}{0.,0.,0.7}
\definecolor{FOR}{rgb}{0.65,0.,0.}
\definecolor{FOfond}{rgb}{0.98,0.98,0.91}

\pagecolor{FOfond}

\usepackage{sectsty}
\allsectionsfont{\color{FOR}}

\usepackage{amssymb}
\usepackage{authblk}

\def\cC{{\mathcal{C}}}
\def\cH{\mathcal{H}}
\def\cP{\mathcal{P}}
\def\cQ{{\mathcal{Q}}}

\def\RR{{\mathbb{R}}}

\def\dd{{\mathrm{d}}}

\def\cF{\mathcal{F}}

\def\bull{\vrule height .9ex width .8ex depth -.1ex }

\newcounter{thenum}
\def\texttheo{\relax}
\newenvironment{theorem}{\medbreak\refstepcounter{thenum}
\noindent\textsc{Theorem} %
\thethenum. \texttheo ---  \it  }{\rm }
\newenvironment{e-proposition}{\medbreak\refstepcounter{thenum}
\noindent\textsc{Proposition} \thethenum. ---  \it  }{\rm }

\newenvironment{e-definition}{\medbreak\refstepcounter{thenum}
\noindent\textsc{Definition} \thethenum. ---  \it  }{\rm }

\newenvironment{remark}{\medbreak\refstepcounter{thenum}\noindent{\it Remark} %
\thethenum. --- }{}

\newenvironment{e-rem}{\medbreak\refstepcounter{thenum}{}%
 \thethenum) }{}

\newenvironment{e-ex}{\medbreak\refstepcounter{thenum}{}%
 \thethenum) }{}
\newenvironment{proof}{\smallbreak\noindent{\sc Proof.} --- \rm}{\quad\bull\smallskip\rm}

\begin{document}

\begin{center}{\LARGE\parindent=0pt{\color{FOR}
      \selectlanguage{english} Does the problem of the Nile have a
      solution?
      \medskip
      
      \Large\selectlanguage{french} Le problème du Nil a-t-il une solution?
      
}}
\end{center}
\vskip2cm

\hbox to \hsize{\parindent =0pt\hbox to 2.5cm{\hfill}\hss
  \vbox{\hsize  = 7cm
    {\large Guillaume \textsc{Chèze}}
  \bigskip

  Université de  Toulouse,
  
  INSA Toulouse, CNRS,

  IMT, Toulouse,

  France

\smallskip

{\tiny guillaume.cheze@math.univ-toulouse.fr}} \hss\hss
\vbox{\hsize
    = 7cm {\large François \textsc{Ollivier}} 
\bigskip

LIX, UMR CNRS 7161 

École polytechnique 

91128 Palaiseau \textsc{cedex}

France

\smallskip

{\tiny francois.ollivier@lix.polytechnique.fr}
}\hss}
\vskip 0.3cm

\begin{center}\parindent =0pt 8 janvier 2022
\end{center}
\vfill

{\small \selectlanguage{english}
\hbox to \hsize{\hss\vbox{\hsize= 5.8cm\selectlanguage{english}
    \noindent \textbf{Abstract.}\spaceskip=1ex plus 5pt minus 2pt    
The problem of the Nile has been stated by R.~Fisher in 1936. This
problem concerns the fair division of an agricultural land and is the
eponymous problem of a general class of problems on measures and
similar regions. In 1938, W.~Feller has shown that the general problem
has no solution. However, we will show that under a natural
hypothesis, the original problem of the Nile is solvable with
connected plots. 
  \smallskip

  \noindent \textbf{Keywords:} Problem of the Nile, Fair cake cutting.
}
\hss\hss\hss

\vbox{\hsize= 6.2cm\selectlanguage{french}
  \noindent \textbf{Résumé.}\spaceskip=1 ex plus 5 pt minus 2pt  
Le problème du Nil a été formulé par R.~Fisher en 1936. Ce problème
porte sur le partage équitable d'une terre agricole et a donné son nom
à une classe générale de problèmes concernant les mesures et les
régions analogues. En 1938, W.~Feller a démontré que le problème
général n'admettait pas de solution. Nous montrerons toutefois que,
sous une hypothèse naturelle, le problème initial du Nil peut être
résolu en utilisant des parcelles connexes.
  \smallskip

  \noindent \textbf{Mots-clés:} Problème du Nil, Partages équitables.
}\hss}}

\selectlanguage{english}
\smallskip

\noindent \hphantom{x}\hbox to 0.1cm{\hfill}{\selectlanguage{french}\small
  \textbf{Classification AMS:} {91B32}.}
\eject

\hbox to\hsize{\hss \textit{Qui aquam Nili bibit rursus
    bibet.}\hskip\parindent}
\hbox to\hsize{\hss \textit{He who drinks the water of the Nile will
    drink it again.}\hskip\parindent}
\bigskip

\section*{Introduction}
In 1936, Ronald Fisher wrote an article entitled ``Uncertain
Inference'' \cite{Fisher} in which he proposed to ``look at both the
past and the future''. At the end of the article, he posed a problem
illustrated with the following example:

\emph{The agricultural land of a pre-dynastic Egyptian village is of
unequal fertility.  Given the height to which the Nile will rise, the
fertility of every portion of it is known with exactitude, but the
height of the flood affects different parts of the territory
unequally.  It is required to divide the area, between the several
households of the village, so that the yields of the lots assigned to
each shall be in pre-determined proportion, whatever may be the height
to which the river rises.}\\

The problem asked by Fisher has thus the following mathematical formulation.\\
Consider a plot $\mathcal{P} \subset \RR^2$, and  $\alpha\in\RR^{n}$,
where $\alpha_{i} \geq 0$ with $\sum_{i=1}^n \alpha_{i}=1$.  Can we divide $\cP$, in order to have $\cP=\sqcup_{i=1}^n \cP_{i}$ and  for any value $h$, $\mu_{h}(\cP_{i})=\alpha_{i} \mu_{h}(\cP)$, where
$\mu_{h}(\cP_{i})$ is the value of $\cP_{i}$ when the height of the
flood is equal to $h$?\\
%!!!! Chercher scihub Fisher:  Quelques remarques sur l'estimation en statistique. Biotypologie, 6: 153-
%158.!!!

In 1938, William Feller~\cite{Feller} studied the general problem posed by Fisher and
gave a negative answer. The aim was to find out
whether for a measure $\mu_{\theta_1, \theta_2 \ldots, \theta_n}$ on
$\RR^n$ depending on the parameters $\theta_1, \theta_2 \ldots,
\theta_n$, it is always possible to find a region $A$ such that
$\mu_{\theta_1, \theta_2 \ldots, \theta_n}(A)=\alpha$ for all values
of the parameters $\theta_1$, \ldots, $\theta_n$, when $\alpha$ is
fixed. If such a region exists, it is said to be ``similar to the
sample space $\RR^n$". Feller then proved that it is not always
possible to find such a region. In particular, he showed that it is
impossible when we consider the measure $\mu_{\theta_1, \theta_2
  \ldots, \theta_n}$ whose density is given by
$$\dfrac{1}{(\sqrt{2\pi})^n}e^{-\frac{(x_1-\theta_1)^2+\cdots+(x_n-\theta_n)^2}{2}}.$$

Feller's result might suggest that there is no solution to the problem
of the Nile. 
We can indeed expect, if we use a sufficiently
general model, that the problem of the Nile has no solution.\\

 Later, in 1961, Dubins and Spanier, in their remarkable article on
 cake-cutting~\cite{Dubins-Spannier}, referred to the problem of the
 Nile and wrote the following:\\ \emph{``Each year, the Nile would
 flood, thereby irrigating or perhaps devastating parts of the
 agricultural land of a predynastic Egyptian village. The value of
 different portions of the land would depend upon the height of the
 flood. In question was the possibility of giving to each of the n
 residents, piece of land whose value would be 1/n of the total land
 value, no matter what the height of the flood."\\ The problem as
 described above allows an infinite number of flood heights and in
 such a case, as shown by Feller \cite{Feller}, need not have a
 solution.  }\\

``Need not\dots'' these cautious words left open the possible
 existence of a solution, at least with some mathematical models.  In
 this note, we will show that if we consider the problem of the Nile,
 it may actually have a solution under some natural simplification
 hypotheses.

%%%%%%%%%%%%%%%%%%%%%%%%%%%%%%%%%

\section{Modeling the problem}
When we consider the problem of the Nile, we can restrict our study to
one bank of the river. Therefore, the problem of the Nile can be
illustrated by the following picture, see Figure~\ref{fig:Nile},
showing a plot $\mathcal{P}$ bounded on the west by the Nile.\\
In antiquity, a traditional orientation was to look east, so that north was ``left'' and south ``right''. In Figure~\ref{fig:Nile}, we use this orientation. \\ 

\begin{figure}[h!]
\centering
\includegraphics[scale=0.9]{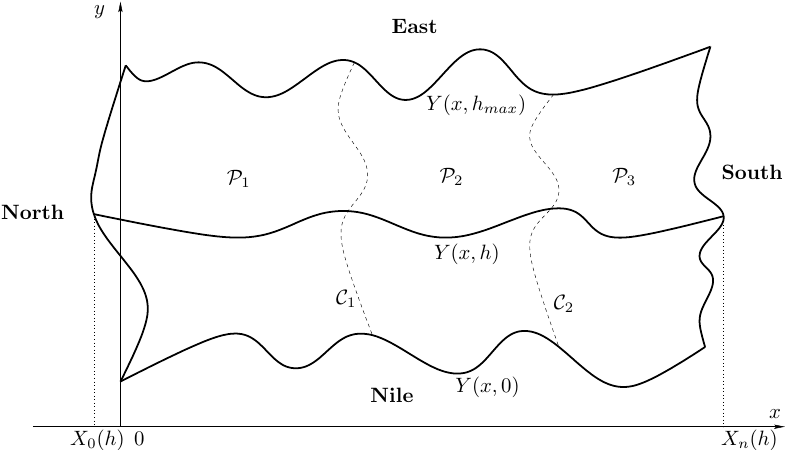}
\caption{Division of a plot $\cP$ between 3 households}\label{fig:Nile}
\end{figure}

In order to describe the plot, we need to introduce some functions.
In the following, the northern and the southern limits will be represented thanks to
continuous functions $X_0(h)$ and $X_n(h)$ from $[0,h_{\rm max}]$ to
$\RR$. The parameter $h$ represents the height of the flood and
$h_{\rm max}$ the maximum height of a flood.\\ Thanks to these
functions, we introduce the set
$$\mathcal{P}'=\{ (x,h) \in \RR^2  \,|\, 0\leq h \leq h_{\rm max}, X_0(h) \leq x \leq X_n(h)\}.$$

Now, the bank of the Nile when the height of the flood is equal to $h$
is given by the function $Y(x,h)$ defined on $\mathcal{P}'$. In
particular, the western and the eastern limits of the plot
correspond respectively to $Y(x,0)$ and $Y(x,h_{\rm
  max})$.

Furthermore, the previous notations mean that we assume
that the northern limit and southern limits are respectively defined
by parametric curves
\begin{equation}\label{eq:bord}
\cC_{0}=\Big(X_{0}(h),Y\big(X_{0}(h),h\big)\Big)\>\hbox{ and }\>\cC_{n}=\Big(X_{n}(h),Y\big(X_{n}(h),h\big)\Big),
\end{equation}

where $n$ is the number of plots in the requested
partition. With such a notation, $\cC_{n}$ is the southern
border of the last plot.\\

Furthermore, we suppose that 
\begin{equation}\label{eq:Ygt0}
  \dfrac{\partial Y}{\partial h}(x,h)>0.
\end{equation}
  This means that when the
flood height $h$ increases, the parameter $y$ on our drawing
increases too. In practical terms, this means that as the flood height $h$
increases, the Nile overflows its banks more and more.  Therefore, $Y$
allows us to obtain a change of variables
$(x,y)=\varphi(x,h)$:
\begin{equation}
  \begin{array}{rll}\label{eq:chgt_var_1}
    \varphi: \mathcal{P}' & \longrightarrow & \cP\\
           (x,h) & \longmapsto &  \big(x, Y(x,h) \big)=(x,y),
  \end{array}
\end{equation}
where $\cP$ is the plot to be divided.\\

We denote by  $\varphi^{-1}$ the inverse of this diffeomorphism

\begin{eqnarray*}\varphi^{-1}: \mathcal{P}& \longrightarrow &  \mathcal{P}'\\ 
(x,y) & \longmapsto & 
  \big(x, H(x,y) \big)=(x,h)
\end{eqnarray*}

The function $H(x,y)$ therefore represents the flood height at which the Nile will cover the point with coordinates $(x,y)$.\\

Now, we are going to explain how we model the value of a plot $\mathcal{Q}$. A simple way to measure the value of a plot $\mathcal{Q}$ is to consider its area. However, 
 we can also measure the value of a plot as a function of the time it
was flooded. In this way, let $\cH \in\cC^{0}\big([0,T],[0,h_{\max}]\big)$ be the
function that models the height of the water, $T$ being the maximal
duration of the flooding. Therefore, in this general situation, the value of a plot is given thanks to the measure $\mu_{\cH}$ defined in the following way:
\begin{equation}\label{eq:mu_h}
\mu_{\cH}(\mathcal{Q}) =\iint_{\mathcal{Q}} D_{\cH}(x,y) dxdy,
\end{equation}

where the density $D_{\cH}$ at
point $(x,y)$, is given by
\begin{equation}\label{eq:D}
D_{\cH}(x,y)=D(x,y,\cH)=G(x,y)F\big(H(x,y),\cH\big),
\end{equation}
where $G$ is a continuous function from $\mathcal{P}$ to $\RR^{+}$ 
and $F$ is a function
from\\ $[0,h_{\rm max}]\times\cC^{0}\big([0,T],[0,h_{\rm max}]\big)$ to
$\RR^{+}$ such that for all $\cH$, $F(\cdot,\cH)$ is continuous.\\

 The main
justification for this assumption is that the Nile shores are often
flat, so that all points will be affected in the same way
by other natural events such as the sun, the wind,  \dots\ The main action is
then the way in which they will be affected by the flood by receiving
the fertile silt, that mostly depend on their height. Therefore the
value of the land at two points of the same altitude can legitimately
be considered equal, which leads to consider $G(x,y)$ to be a
constant. However our construction will work for any function $G$.\\

For example, if we assume that the value of a plot of land is proportional to the length of time
it remained flooded, then we have  
$$F(H(x,y),\cH)=\int_{0}^{T}
\theta\big(\cH(t)-H(x,y)\big)\dd t,$$
 where $\theta$ stands for the Heaviside
function: $\theta(x)=1$ if $x\ge0$ and $\theta(x)=0$ if $x<0$.\\
 But we could use other functions, such as 
 $$F(H(x,y),\cH)=\int_{0}^{T} \max\big(\cH(t)-H(x,y),0\big)\dd t.$$ In
 this latter case, the value of a plot of land depends on the amount
 of water that was previously above it.

In what follows, we will show that 
it is possible to divide the territory into $n$ parcels $\cP_1$, \ldots,
$\cP_{n}$ with constant
predetermined proportion of values, \textit{i.e.} not depending on the
choice of $\cH$. Moreover, we will prove  that the boundary between  $\cP_{i}$ and  $\cP_{i+1}$  will be given by a parametric
curve $\mathcal{C}_{i}$, see Figure~\ref{fig:Nile}.

\begin{theorem}\label{th:theo_v1}
With the previous notations, for any $n$-uple $\alpha\in\RR^{n}$,
  where $\alpha_{i} \geq 0$ and $\sum_{i=1}^n \alpha_{i}=1$, there
  exists a partition $\cP=\sqcup_{i=1}^n \cP_i$, where
  the sets $\cP_{i}$ are connected and such that for all $\cH \in \cC^{0}([0,T])$ we have
$$\forall 1\le i\le n,\>
\mu_{\cH}(\cP_{i})=\alpha_{i} \mu_{\cH}(\cP).$$
Furthermore, the sets $\cP_i$ are defined by
\begin{equation}\label{eq:x_1}
\cP_i=\{ (x,y) \,|\, X_{i-1}(h)\le x\le X_{i}(h), \textrm{ and } h=H(x,y) \in[0,h_{\rm max}]\},
\end{equation}
where the functions %\footnote{We recall that $X_{0}$ and $X_{n}$ have
%already been defined as borders of $\cP$ (see eq.~\eqref{eq:bord}).}
$X_{i}:[0,h_{\rm max}]\mapsto\RR$, for $1<i<n$,
are continuous and satisfy
\begin{equation}\label{eq:x_2}
  X_{0}(h)\le X_{1}(h) \le\cdots\le X_{n-1}(h)\le X_{n}(h).
\end{equation}
\end{theorem}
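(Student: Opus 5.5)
The idea is to pull everything back to $\cP'$ with the change of variables $\varphi$ of \eqref{eq:chgt_var_1}. There the density factors as a function of $(x,h)$ times a function of $h$ and $\cH$ alone, so we can equalize proportions on each level line $h=\hbox{const}$ separately. Since $\varphi(x,h)=(x,Y(x,h))$, its Jacobian is $\partial Y/\partial h>0$ by \eqref{eq:Ygt0}. For any region $\cQ=\varphi(\cQ')$ we therefore get
$$\mu_{\cH}(\cQ)=\iint_{\cQ'} G\big(x,Y(x,h)\big)\,\frac{\partial Y}{\partial h}(x,h)\,F(h,\cH)\,dx\,dh .$$
Write $g(x,h)=G(x,Y(x,h))\,\partial_h Y(x,h)\ge 0$, which is continuous on $\cP'$ and independent of $\cH$. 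Every set of the form \eqref{eq:x_1} is $\varphi$ of $\{X_{i-1}(h)\le x\le X_i(h)\}$. Fubini then gives
$$\mu_{\cH}(\cP_i)=\int_0^{h_{\max}} F(h,\cH)\Big(\int_{X_{i-1}(h)}^{X_i(h)} g(x,h)\,dx\Big)dh .$$
It is therefore enough to choose the $X_i$ so that, for every $h$,
$$\int_{X_{i-1}(h)}^{X_i(h)} g(x,h)\,dx=\alpha_i\,m(h),\qquad m(h)=\int_{X_0(h)}^{X_n(h)} g(x,h)\,dx.$$
Integrating this against $F(h,\cH)$ yields $\mu_{\cH}(\cP_i)=\alpha_i\mu_{\cH}(\cP)$ for every $\cH$ at once.

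Concretely, let $\beta_i=\alpha_1+\cdots+\alpha_i$ and $\Phi(x,h)=\int_{X_0(h)}^x g(s,h)\,ds$. This is continuous in $(x,h)$ and nondecreasing in $x$. We want $X_i(h)$ with $\Phi(X_i(h),h)=\beta_i m(h)$. If $G>0$, then $\Phi$ is strictly increasing in $x$, and $X_i(h)$ is uniquely defined by the intermediate value theorem. Continuity of $X_i$ then follows by a standard argument: take a sequence $h_k\to h$ and a limit point of $X_i(h_k)$; by continuity of $\Phi$ it solves the same equation, and uniqueness forces it to equal $X_i(h)$. The ordering \eqref{eq:x_2} is immediate from $\beta_{i-1}\le\beta_i$ and monotonicity of $\Phi$. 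Each $\cP_i$ is connected because it is the image under $\varphi$ of the region between two continuous graphs over $[0,h_{\max}]$, which is path connected. Distinct pieces overlap only along the curves $\cC_i$, which have measure zero, so the equalities hold for the partition.

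The main obstacle is the case where $G$ may vanish, so that $g(\cdot,h)$ vanishes on intervals. Then the level set $\{x:\Phi(x,h)=\beta_i m(h)\}$ is a closed interval rather than a point, and a careless choice (say the smallest solution) can jump in $h$. My first attempt is to perturb: replace $g$ by $g_\varepsilon=g+\varepsilon$. The perturbed problem has unique continuous solutions, but passing to the limit need not preserve continuity. A better route is to select the midpoint of the solution interval, or a convex combination of its endpoints. The smallest solution is lower semicontinuous and the largest is upper semicontinuous, but that alone does not make the midpoint continuous. Instead I would exploit the fact that any $x$ in the solution interval gives the same integrals, since $g$ vanishes on a set of measure zero for the integrals there. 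This lets me modify the boundary curve freely inside the zero region. Concretely, I would take a continuous selection of the lower semicontinuous$\,$/$\,$upper semicontinuous interval-valued map $h\mapsto[\underline X_i(h),\overline X_i(h)]$. The upper and lower semicontinuity are of the right kind for a Michael-type (or, in one dimension, elementary) continuous selection, provided the map is monotone in the appropriate sense. If this becomes too technical, I would state the result first under the assumption $G>0$, which is the natural one since $G$ is meant to be constant, and handle $G\ge0$ as a remark.
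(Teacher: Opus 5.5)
Your argument is correct and is essentially the paper's proof. The paper also pulls back by $\varphi$ and factors the density as $\tilde G(x,h)F(h,\cH)$ with $\tilde G(x,h)=G(x,Y(x,h))\,\partial_h Y(x,h)$, and it defines $X_i$ by exactly your equation $\int_{X_0(h)}^{X_i(h)}\tilde G\,d\xi=\beta_i\int_{X_0(h)}^{X_n(h)}\tilde G\,d\xi$; the only cosmetic difference is that it normalizes via a second diffeomorphism $\psi$ onto $[0,1]\times[0,h_{\max}]$ and cuts vertical strips there instead of applying Fubini directly, while your continuity and connectedness arguments fill in details the paper leaves implicit.

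Note that the paper's proof likewise assumes $G>0$ ("As $G(x,y)>0$"), and you should drop the $G\ge0$ digression rather than try to complete it. The lower endpoint of the solution interval is lower semicontinuous and the upper endpoint upper semicontinuous, which is the wrong way round for an insertion or selection theorem. Moreover, continuity of $X_i$ can genuinely fail: take $G$ vanishing along one whole level curve $h=h_0$, with different weighted medians of $\tilde G(\cdot,h)$ above and below it, and $n=2$, $\alpha=(1/2,1/2)$.
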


%%%%%%%%%%%%%%%%%%%%%%%%%%%%%%%%%%%%
\section{Proof of Theorem~\ref{th:theo_v1}}

Before giving the proof in detail, we will provide an intuitive explanation.
Suppose that $\cP=[0,1] \times [0,1]$, and that $H(x,y)=y$ or, equivalently, $Y(x,h)=h$. This means that $\cP$ is a rectangle and that for all possible flood heights, the bank of the Nile is always a straight line parallel to the line $x=0$.\\
Let us further assume that the value of a point $(x,y)$ depends only on $y$ because all points with the same abscissa will have been covered in the same way during the flood. Then the value of a parcel $\cQ$ is given by an expression of the form $\mu_{\cH}(\cQ)=\iint_{\cQ} \mathcal{F}(y,\mathcal{H}) \dd x \dd y$.
We then see that the plot $\cP_i=[a_i,a_i+\alpha_i]\times [0,1]$ has the following value
$$\mu_{\cH}(\cP_i)= \int_0^1 \int_{a_i}^{a_i+\alpha_i} \mathcal{F}(y,\mathcal{H}) \dd x \dd y= \alpha_i \int_0^1\int_0^1 \mathcal{F}(y,\mathcal{H}) \dd x \dd y=\alpha_i \mu_{\cH}(\cP).$$
Thus, if we set $\alpha_0=0$, $a_i=\sum_{k=0}^{i-1} \alpha_i$, and $b_i=a_i+\alpha_i$, then $\sqcup_{i=1}^n [a_i,b_i] \times [0,1]$ gives the desired partition in this simple case.\\

In the general case, we use a change of variables to follow the same idea.\\

\begin{proof}[Proof of Theorem~\ref{th:theo_v1}]
First, we remark that thanks to the change of variables $\varphi$, see~\eqref{eq:chgt_var_1}, for all measurable subspace $\cQ \subset \cP$, we have

$$
\mu_{\cH}(\cQ)=\iint_\cQ D(x,y,\cH) \dd x\dd y=\iint_{\varphi^{-1}(\cQ)}  \tilde{D}(x,h,\cH) \dd x\dd h,
$$
where $\tilde D(x,h,\cH):=\tilde G(x,h)F(h,\cH)$, with $\tilde G(x,h):=G\big( x, Y(x,h)\big) \dfrac{\partial
Y}{\partial h}(x,h)$. As $G(x,y)>0$ and $\dfrac{\partial
Y}{\partial h}(x,y)>0$, see~\eqref{eq:Ygt0}, we have:
\begin{equation}\label{eq:Ggt0}
  \tilde{G}(x,h)>0.
\end{equation}

This allows us to define
the function $\tilde X$ in the following way:
\begin{equation}\label{eq:tildex}
  \tilde X(x,h)=\frac{\int_{X_{0}(h)}^{x} \tilde G(\xi,h) \dd \xi}
                     {\int_{X_{0}(h)}^{X_{n}(h)} \tilde G(\xi,h) \dd \xi}.
\end{equation}
By formula \eqref{eq:Ggt0}, for all $h\in[0,h_{\max}]$,
$\tilde{X}(\cdot,h)$ is a strictly increasing function of $x$, such
that $\tilde{X}(X_{0}(h),h)=0$ and $\tilde{X}(X_{n}(h),h)=1$, for any $0\le h\le
h_{\max}$. This defines the following diffeomorphism
\begin{eqnarray*}
\psi: \cP'& \longrightarrow & \cP''\\
(x,h) & \longmapsto & \big( \tilde{X}(x,h), h\big)=(\tilde{x},h)
\end{eqnarray*}
where
 %$\cP'= \varphi^{-1}(\cP)=\{(x,h) \in \RR^2 \,|
%\, 0 \leq h \leq h_{\rm max}, \, X_0(h) \leq x \leq X_n(h) \}$ 
%and
$\cP''=\psi\big(\varphi^{-1}(\cP)\big)=\{ (\tilde{x},h) \in \RR^2 \, | \, 0 \leq h \leq h_{\rm
  max}, \, 0 \leq \tilde{x}\leq 1\}$.\\

\begin{remark} If $G(x,y)=1$ then $\tilde
  X(x,h)=\dfrac{x-X_0(h)}{X_n(h)-X_0(h)}$. Using the function $\psi$
  is thus a natural way to transform $\cP'$ into a rectangle in order
  to apply the idea explained in the beginning of this section.
\end{remark}
  
By construction, the Jacobian determinant of $\psi$ is equal to
$\tilde{G}(x,h)/(\int_{X_{0}(h)}^{X_{n}(h)} \tilde{G}(\xi,h) \dd \xi)$. Therefore, the
change of variables $\psi$ gives for all plots $\cQ \subset
\cP$,
\begin{equation*}
  \mu_{\cH}(\cQ)=\iint_{\varphi^{-1}(\cQ)} \tilde G(x,h)F(h,\cH) \dd x\dd h=
  \iint_{\psi(\varphi^{-1}(\cQ))} \left(\int_{X_{0}(h)}^{X_{n}(h)} \tilde
  G(\xi,h) \dd \xi\right) F(h,\cH)\dd\tilde x\dd h.
\end{equation*}

We set $\mathcal{F}(h,\cH)=\left(\int_{X_{0}(h)}^{X_{n}(h)} \tilde
  G(\xi,h) \dd \xi\right) F(h,\cH)$, and this gives
  \begin{equation}\label{eq:tildeD}
  \mu_{\cH}(\cQ)=\iint_{\varphi^{-1}(\cQ)} \tilde G(x,h)F(h,\cH) \dd x\dd h=
  \iint_{\psi(\varphi^{-1}(\cQ))} \mathcal{F}(h,\cH)\dd\tilde x\dd h.
\end{equation}

Now, in order to construct the plot $\cP_i$, we set: $\alpha_0=0$, $a_i=\sum_{k=0}^{i-1} \alpha_i$, and  $b_i=a_i+\alpha_i$.
This allows us to define
$$\cP''_i=\{ (\tilde{x},h) \,|\,  a_i \leq \tilde{x} \leq b_i, 0 \leq h \leq h_{\rm max} \},$$ 
and  then
$$\cP_i=\varphi\big(\psi^{-1}(\cP''_i)\big).$$
We can remark that the sets $\cP_i$ are independent of $\mathcal{H}$.
Furthermore, this definition means that the continuous functions $X_i$ given in the theorem are defined thanks to the equality
$\int_{X_0(h)}^{X_i(h)} \tilde{G}(\xi,h)\dd\xi=b_i \int_{X_0(h)}^{X_n(h)} \tilde{G}(\xi,h)\dd\xi$. The inequality \mbox{$X_i(h) \leq X_{i+1}(h)$} is thus straighforward.

At last, we have
\begin{equation*}
\begin{split}  
\mu_{\cH}(\cP_{i})&=\iint_{\cP''_i} \mathcal{F}(h,\cH) \dd\tilde x\dd h =\int_{0}^{h_{\rm max}}
\int_{a_i}^{b_i}  \mathcal{F}(h,\cH)\dd\tilde
x\dd h\\
&=\int_{0}^{h_{\rm max}} \mathcal{F}(h,\cH) \int_{a_i}^{b_i} \dd\tilde
x\dd h =\int_{0}^{h_{\rm max}}
\alpha_i \mathcal{F}(h,\cH)  \dd h\\
&=\alpha_i \int_{0}^{h_{\rm max}} \mathcal{F}(h,\cH)  \int_{0}^1\dd\tilde x\dd h =\alpha_i \int_{0}^{h_{\rm max}}
\int_{0}^1 \mathcal{F}(h,\cH) \dd\tilde
x\dd h\\
&=\alpha_{i}\iint_{\cP''} \cF(h,\cH) \dd\tilde
x\dd h=\alpha_{i}\mu_{\cH}(\cP).
\end{split}
\end{equation*}
\end{proof}

\section*{Conclusion}
    
We have seen how the problem of the Nile can be solved, under some
reasonable hypotheses.  The reason of this situation is that the
problem of the Nile is more restricted than the problem considered by
Feller.

We consider indeed only one parameter $h$, the height of the
flood. Its effect mostly depends on the altitude of the considered
point that we take as a new coordinate function. If we wish to take
into account another parameter, for example the time $t$ of exposure
to the sun, in generic situations the effect of the flooding and of
the sun can be locally unique. This means that we can no longer apply
our strategy that is to cut the level curves, using the fact that the
new coordinate function $\tilde X$ has no influence on the relative
value of the ground. Indeed, the influence of sunshine can depend on
it. 

Under our simplification hypotheses, the Nile problem is therefore a
special case of Feller's problem for which a solution is
possible. Similar results considering more parameters, would
necessitate to parcel out a space of higher dimension, strictly
greater that the number of parameters.

 \bibliographystyle{plain}
 \bibliography{biblio_Nile}

\end{document}